\documentclass[11pt]{article}
\usepackage{amsmath,amssymb, amscd}

\newtheorem{propo}{{\bf Proposition}}[section]
\newtheorem{coro}[propo]{{\bf Corollary}}
\newtheorem{lemma}[propo]{{\bf Lemma}} \newtheorem{theor}[propo]{{\bf
Theorem}} 

\newenvironment{proof}{{\bf Proof.}}{$\Box$}

\begin{document}

\vspace*{1.0in}

\begin{center} ON MAXIMAL SUBALGEBRAS AND A GENERALISED JORDAN-H\"OLDER THEOREM FOR LIE ALGEBRAS
\end{center}
\bigskip

\begin{center} DAVID A. TOWERS 
\end{center}
\bigskip

\begin{center} Department of Mathematics and Statistics

Lancaster University

Lancaster LA1 4YF

England

d.towers@lancaster.ac.uk 
\end{center}
\bigskip

\begin{abstract}
The purpose of this paper is to continue the study of chief factors of a Lie algebra and to prove a further strengthening of the Jordan-H\"older Theorem for chief series.
\par 
\noindent {\em Mathematics Subject Classification 2010}: 17B05, 17B20, 17B30, 17B50.
\par
\noindent {\em Key Words and Phrases}: Lie algebras, chief factor, chief series, supplemented, complemented, primitive, $m$-crossing, $m$-related
\end{abstract}

\section{Introduction}
Throughout $L$ will denote a finite-dimensional Lie algebra over a field $F$. The factor algebra $A/B$ is called a {\em chief factor} of $L$ if $B$ is an ideal of $L$ and $A/B$ is a minimal ideal of $L/B$. A chief factor $A/B$ is called {\em Frattini }if $A/B\subseteq \phi
\left( L/B\right) .$ This concept was first introduced in \cite{prefrat}.
\par

If there is a subalgebra, $M$ such that $L=A+M$ and $B\subseteq A\cap M,$ we
say that $A/B\ $is$\ $a \textit{supplemented} chief factor of $L,$ and that $%
M$ is a \textit{supplement} of $A/B\ $in $L.$ Also, if $A/B$ is a
non-Frattini chief factor of $L$, then $A/B$ is \textit{supplemented} by a
maximal subalgebra $M$ of $L.$
\par

If $A/B\ $is$\ $a chief factor of $L$ supplemented by a subalgebra $M$ of $%
L, $ and \linebreak $A\cap M=B$ then we say that $A/B$ is \textit{%
complemented} chief factor of $L,$ and $M$ is a \textit{complement} of $A/B$
in $L$. When $L$ is \textit{solvable}, it is easy to see that a chief factor
is Frattini  if and only if it is not complemented.
\par

If $U$ is a subalgebra of $L,$ the \textit{core} of $U$,  $U_{L}$,
is the largest ideal of $L$ contained in $U$. We say
that $U$ is core-free in $L$ if $U_{L}=0.$ 

We shall call $L$ \textit{%
primitive} if it has a core-free maximal subalgebra. Then we have the following characterisation of primitive Lie algebras.

\begin{theor}\label{t:1} (\cite[Theorem 1.1]{prim})
\begin{itemize}
\item[(i)]  A Lie algebra $L$ is primitive if and only if there exists a subalgebra $M$ of $L$ such
that $L = M+A$ for all minimal ideals $A$ of $L$.
\item[(ii)]  Let $L$ be a primitive Lie algebra. Assume that $U$ is a core-free maximal subalgebra
of $L$ and that $A$ is a non-trivial ideal of $L$. Write $C = C_L(A)$.
Then $C \cap U = 0$. Moreover, either $C = 0$ or $C$ is a minimal ideal of $L$.
\item[(iii)]  If L is a primitive Lie algebra and $U$ is a core-free maximal subalgebra of $L$,
then exactly one of the following statements holds:
\begin{itemize}
\item[(a)] $Soc(L) = A$ is a self-centralising abelian minimal ideal of $L$ which is complemented by $U$; that is, $L = U \dot{+} A$.
\item[(b)] $Soc(L) = A$ is a non-abelian minimal ideal of $L$ which is
supplemented by $U$; that is $L = U+A$. In this case $C_L(A) = 0$.
\item[(c)] $Soc(L) = A \oplus B$, where $A$ and $B$ are the two unique minimal ideals of $L$ and both are complemented by $U$; that is, $L = A \dot{+} U = B \dot{+} U$. In this case $A = C_L(B)$, $B = C_L(A)$, and $A$, $B$ and $(A+B) \cap U$ are nonabelian isomorphic algebras.
\end{itemize}
\end{itemize}
\end{theor}

We say that $L$ is  
\begin{itemize}
\item {\em primitive of type $1$} if it has a unique minimal ideal that is abelian;
\item {\em primitive of type $2$} if it has a unique minimal ideal that is  non-abelian; and
\item {\em primitive of type $3$} if it has precisely two distinct minimal ideals each of which is  non-abelian.
\end{itemize}
\bigskip

If $A/B$ be a supplemented chief factor of $L$ for which $M$ is a maximal subalgebra of $L$ supplementing $A/B$ in $L$ such that $L/M_L$ is monolithic and primitive, we call $M$ a {\em monolithic maximal subalgebra} supplementing $A/B$ in $L$.  Note that \cite[Proposition 2.5 (iii) and (iv)]{prim} show that such an $M$ exists.
\par

We say that two chief factors are {\em $L$-isomorphic}, denoted by `$\cong_L$', if they are isomorphic both as algebras and as $L$-modules. Note that if $L$ is a primitive Lie algebra of type $3$, its two minimal ideals are not $L$-isomorphic, so we introduce the following concept. We say that two chief factors of $L$ are {\em $L$-connected} if either they are $L$-isomorphic, or there exists an epimorphic image $\overline{L}$ of $L$ which is primitive of type $3$ and whose minimal ideals are $L$-isomorphic, respectively, to the given factors. (It is clear that, if two chief factors of $L$ are $L$-connected and are not $L$-isomorphic, then they are nonabelian and there is a single epimorphic image of $L$ which is primitive of type $3$ and which connects them.)

Our primary objective is to generalise further the version of the Jordan-H\"{o}lder Theorem for chief series of $L$ established in \cite{prim}. Our result could probably be obtained from \cite{laf}, but we prefer to follow the approach adopted for groups (though in a more general context) in \cite{b-e} as interesting results and concepts are obtained on the way. 

\section{Preliminary results on chief factors}
Let $A/B$ and $C/D$ be chief factors of $L$. We write $A/B \searrow C/D$ if $A=B+C$ and $B \cap C=D$. Clearly, if $A/B$ is abelian, then so is $C/D$.
\begin{lemma}\label{l:1} Let $A/B \searrow C/D$. Then
\begin{itemize}
\item[(i)] if $A/B$ is supplemented by $M$ in $L$, then so is $C/D$;
\item[(ii)] if $M$ supplements $A/B$ in $L$ and $K$ supplements $B/D$ in $L$, then $C+M\cap K$ supplements $A/C$ in $L$ and, in this case, $M\cap K$ supplements $A/D$ in $L$; and
\item[(iii)] (i) and (ii) both hold with `supplemented' replaced by `complemented'.
\end{itemize}
If, further, $C/D$ is non-abelian, then
\begin{itemize}
\item[(iv)] the set of monolithic supplements of $C/D$ in $L$ coincides with the set of monolithic supplements of $A/B$ in $L$;
\item[(v)] if $B/D$ is an abelian chief factor of $L$ then the (possibly empty) set of complements of $B/D$ in $L$ coincides with the set of complements of $A/C$ in $L$.
\end{itemize}
\end{lemma}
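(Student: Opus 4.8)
The plan is to build everything on two instances of the second isomorphism theorem: from $A=B+C$ and $B\cap C=D$ we get the $L$-module isomorphism $A/B\cong_L C/D$, and, when $B/D$ is itself a chief factor, $A/C\cong_L B/D$. I will use throughout that $A,B,C,D$ are ideals of $L$ with $D\subseteq C\subseteq A$ and $D\subseteq B\subseteq A$, that the sum of an ideal and a subalgebra is a subalgebra, and the modular law $X\cap(Y+Z)=(X\cap Y)+Z$, valid when $Z\subseteq X$. Parts (i)--(iii) then reduce to bookkeeping: if $M$ supplements $A/B$ then $B\subseteq A\cap M\subseteq M$, so $L=A+M=B+C+M=C+M$ and $D=B\cap C\subseteq C\cap M$, and if moreover $A\cap M=B$ then $C\cap M=C\cap(A\cap M)=C\cap B=D$. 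For (ii), $B\subseteq M$ gives $M=M\cap(B+K)=B+(M\cap K)$ by the modular law, hence $L=A+M=A+(M\cap K)$; since $C\subseteq A$ this makes $C+(M\cap K)$ a supplement of $A/C$, while $D\subseteq(B\cap K)\cap M\subseteq M\cap K$ makes $M\cap K$ a supplement of $A/D$, and the complemented versions drop out of $A\cap(M\cap K)=(A\cap M)\cap K=B\cap K=D$ and $A\cap(C+(M\cap K))=C+(A\cap M\cap K)=C+D=C$.

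Now assume $C/D$, hence $A/B$, is non-abelian. One inclusion in (iv) is immediate from (i), since being a monolithic maximal subalgebra is a property of the subalgebra alone; and if $M$ complements $A/C$ then $C=A\cap M\subseteq M$, so $L=B+C+M=B+M$ and $B\cap M=B\cap(A\cap M)=B\cap C=D$, giving one inclusion in (v). For the reverse inclusions in both parts it is enough to show that the given $M$ contains $C$: once $C\subseteq M$, we get $L=A+M$ for free (from $L=C+M$ or $L=B+M$ together with $C\subseteq A$), in the complemented case $A\cap M=(B+C)\cap M=C+(B\cap M)=C+D=C$ by the modular law using $B\cap M=D$, and for (iv) $C\subseteq M$ will be used to force $B\subseteq M$, which is all that is needed there.

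The crux --- and the step I expect to cause the real difficulty --- is extracting $C\subseteq M$ from non-abelianness, and for this I would pass to $\overline L=L/D$, in which $\overline C:=C/D$ is a non-abelian minimal ideal and $[\overline B,\overline C]\subseteq\overline B\cap\overline C=0$ since $B\cap C=D$. For (v), $\overline M:=M/D$ complements the abelian minimal ideal $\overline B:=B/D$, and one checks $\overline C\cap\overline M$ is an ideal of $\overline L$ (using $\overline L=\overline B+\overline M$, $[\overline B,\overline C]=0$, and that $\overline C$ is an ideal and $\overline M$ a subalgebra); minimality of $\overline C$ then leaves only $\overline C\cap\overline M=0$ or $\overline C\subseteq\overline M$, and the first is excluded because, with $Z=C_{\overline L}(\overline B)\supseteq\overline B+\overline C$, the subspace $Z\cap\overline M$ is an ideal of $\overline L$ satisfying $Z=\overline B\oplus(Z\cap\overline M)$, so both $\overline B$ and $Z\cap\overline M$ (meeting $\overline C$ in $0$) centralise $\overline C$, whence $\overline C\subseteq Z$ would centralise itself and be abelian. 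For (iv), $M$ is a monolithic supplement of $C/D$, so $M\neq L$, $L=C+M$, and $C\cap M_L=D$ by minimality, so that $(C+M_L)/M_L\cong_L C/D$ is the monolith of the monolithic algebra $L/M_L$; if $B\not\subseteq M$ then $B\not\subseteq M_L$, so the nonzero ideal $(B+M_L)/M_L$ contains this monolith, giving $C\subseteq B+M_L$ and hence $\overline C\subseteq\overline B+M_L/D$, which is impossible since $\overline B$ and $M_L/D$ both centralise $\overline C$. In each case the non-abelian hypothesis rules out the bad alternative precisely because two ideals of a Lie algebra with zero intersection commute and an ideal that centralises itself is abelian; everything else is the modular law and the two isomorphism-theorem identifications.
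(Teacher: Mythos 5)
Your proof is correct. Parts (i)--(iii) are the same modular-law bookkeeping as the paper's. In (iv) and (v) you arrive at the same pivotal inclusions ($B\subseteq M$, resp.\ $C\subseteq M$), but by slightly different means: the paper identifies $C_L(C)=M_L$ using the fact that $L/M_L$ is primitive of type $2$, and in (v) computes $C_L(B)=B\oplus M_L$, $A=B\oplus (A\cap M_L)$ and then $C=C^2=A^2\subseteq M_L$; you instead argue by contradiction, using in (iv) only that $L/M_L$ is monolithic (if $B\not\subseteq M$ then the monolith $(C+M_L)/M_L$ lies inside $(B+M_L)/M_L$, forcing $C/D$ to be abelian because $B$ and $M_L$ both centralise $C$ modulo $D$), and in (v) that $\overline{C}\cap\overline{M}$ is an ideal of $\overline{L}$ which, if zero, would place $\overline{C}$ inside $C_{\overline{L}}(\overline{B})=\overline{B}\oplus\bigl(C_{\overline{L}}(\overline{B})\cap\overline{M}\bigr)$ and make it abelian. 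Both routes hinge on the same two facts (ideals with trivial intersection commute; a non-abelian chief factor cannot centralise itself), so what your version buys is mainly economy: you never invoke the type-$2$ primitive structure or the core-centraliser identification, only monolithicity and elementary ideal arithmetic. One cosmetic slip: in your roadmap you say that for (iv) `$C\subseteq M$ will be used to force $B\subseteq M$', which cannot be literally meant ($C\subseteq M$ would give $M=L$ there); your actual argument in the final paragraph establishes $B\subseteq M$ directly and does not use it, so nothing is lost, but the sentence should be corrected.
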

\begin{proof} We have $A=B+C$ and $B \cap C=D$. 
\begin{itemize}
\item[(i)] Suppose that $L=A+M$ and $B \subseteq A\cap M$. Then $L=B+C+M=C+M$ and $D=B\cap C \subseteq A\cap M\cap C=M\cap C$.
\item[(ii)] Suppose that $L=A+M=B+K$, $B \subseteq A\cap M$ and $D \subseteq B\cap K$. Then $A+C+M\cap K=A+M\cap K=B+C+M\cap K=C+M\cap(K+B)=C+M=L$. Furthermore, $C=C+D \subseteq C+B\cap K \subseteq C+(A\cap M\cap K)=A\cap (C+M\cap K)$, so $C+M\cap K$ supplements $A/C$ in $L$.
\par 

Moreover, in this case, $L=A+M\cap K$ and $D \subseteq B\cap K \subseteq A\cap M\cap K$, so $M\cap K$ supplements $A/D$ in $L$.
\item[(iii)] Simply substituting equalities for the inequalities in the above proofs will yield the corresponding results for complements.
\end{itemize}
For the remaining parts we can assume, without loss of generality that $D=0$ and that $C$ is a non-abelian minimal ideal of $L$. It follows that $[C,B] \subseteq C\cap B=0$.
\begin{itemize}
\item[(iv)] If $M$ is a monolithic supplement of $C$ in $L$ then $L=C+M$ and $L/M_L$ is a primitive Lie algebra of type $2$. Now Soc$(L/M_L)=(C+M_L)/M_L$ and $C_L(C)=C_L((C+M_L)/M_L)=M_L$. Hence $B\subseteq C_L(C)=M_L\subseteq M$. Thus $L=A+M$ with $B\subseteq A\cap M$ and $M$ is a monolithic supplement of $A/B$ in $L$.
\par

Conversely, if $M$ is a monolithic supplement of $A/B$ in $L$, then $M$ supplements $C$ in $L$, by (i). 
\item[(v)] Suppose that $B$ is an abelian ideal of $L$ complemented by $M$, so $L=B\dot{+} M$. Then $C_L(B)=B\oplus M_L$ and $A=B\oplus A\cap M_L$. Since $C$ is non-abelian, this implies that $C=C^2=A^2\subseteq A\cap M_L$. Thus $C\subseteq M$ and $M$ complements $A/C$ in $L$. 
\par

Conversely, suppose that $L=A+M$ and $A\cap M=C$. Then $L=B+C+M=B+M$ and $C=A\cap M=C+B\cap M$, so $B\cap M\subseteq B\cap C=0$. Hence $M$ complements $B$ in $L$.
\end{itemize}
\end{proof}

\begin{lemma}\label{l:2} Let $U$ and $S$ be two maximal subalgebras of a Lie algebra $L$ such that $U_L \neq S_L$. Suppose that $U$ and $S$ supplement the same chief factor $A/B$ of $L$. Then $M=A+U\cap S$ is a maximal subalgebra of $L$ such that $M_L=A+U_L\cap S_L$.
\begin{itemize}
\item[(i)] Assume that $A/B$ is abelian. Then $M$ is a maximal subalgebra of type $1$ which complements the chief factors $U_L/U_L\cap S_L$ and $S_L/U_L\cap S_L$. Moreover, $M\cap U= M\cap S=U\cap S$.
\item[(ii)] Assume that $A/B$ is non-abelian. Then either $U$ or $S$ is of type $3$. Suppose that $U$ is of type $3$ and $S$ is monolithic. Then $U_L \subset S_L=C_L(A/B)$. Moreover, $M$ is a maximal subalgebra of type $2$ of $L$ which supplements the chief factor $S_L/U_L$.
\item[(iii)] Assume that $U$ and $S$ are of type $3$. Then $M$ is a maximal subalgebra of type $3$ of $L$ which complements the chief factors $(A+S_L)/M_L$ and $(A+U_L)/M_L$. Moreover $M\cap U=M\cap S=U\cap S$.
\end{itemize}
\end{lemma}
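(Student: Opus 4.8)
The plan is to settle the two ``preamble'' assertions by modular-law arguments, make two reductions, and then treat the three cases using Theorem~\ref{t:1} to control the primitive quotients $L/U_L$ and $L/S_L$. First, $M=A+U\cap S$ is a subalgebra, since $[A,A]\subseteq A$, $[A,U\cap S]\subseteq A$ and $[U\cap S,U\cap S]\subseteq U\cap S$. For the core, $A+U_L\cap S_L$ is an ideal of $L$ contained in $M$; conversely $A\subseteq M_L$, so the modular law gives $M_L=A+(M_L\cap U\cap S)$, and $M_L\cap U\subseteq U_L$, $M_L\cap S\subseteq S_L$ force $M_L\cap U\cap S\subseteq U_L\cap S_L$, whence $M_L=A+U_L\cap S_L$. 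I then factor out $B$ and then $(U_L\cap S_L)/B$: the hypotheses transfer, and the assertions transfer back through the correspondence theorem (the preimage of $\bar M=\bar A+\bar U\cap\bar S$ is $M$, and $L/M_L$ is unchanged). So from now on $A$ is a minimal ideal of $L$ with $U_L\cap S_L=0$, hence $M_L=A$; put $C=C_L(A)$.

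For the maximality of $M$: since $L=A+U$ and $A\subseteq M$, the modular law yields mutually inverse, inclusion-preserving bijections $N\mapsto N\cap U$, $T\mapsto A+T$ between the subalgebras of $L$ containing $M$ and the subalgebras of $U$ containing $M\cap U=(A\cap U)+U\cap S$; hence $M$ is maximal in $L$ iff $M\cap U$ is maximal in $U$, and symmetrically for $S$. Now apply Theorem~\ref{t:1} to $L/U_L$ and $L/S_L$. As $(A+U_L)/U_L$ is a minimal ideal of the primitive algebra $L/U_L$, it is complemented by $U/U_L$ unless $A$ is non-abelian and $L/U_L$ is of type $2$; so $A\cap U=0$ outside that case, and likewise for $S$. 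Moreover $C=A\oplus U_L=A\oplus S_L=U_L\oplus S_L$ (with $C$ abelian and $U_L,S_L\cong_L A$) when $A$ is abelian, $C=U_L$ when $A$ is non-abelian and $U$ is of type $2$, and $C$ is the preimage in $L$ of the second minimal ideal of $L/U_L$ when $U$ is of type $3$; in all cases $C\cap U=U_L$ and $C\cap S=S_L$. Comparing these forces: $U$ and $S$ are never both of type $2$, so when $A$ is non-abelian at least one of them is of type $3$; if $U$ is of type $3$ and $S$ of type $2$ then $U_L\subsetneq S_L=C$, hence (after the reduction) $U_L=0$ and $L$ is itself primitive of type $3$; and if both are of type $3$ then $C=U_L\oplus S_L$ with $U_L,S_L$ distinct (non-$L$-isomorphic) minimal ideals of $L$.

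It remains to finish each case. In cases (i), (iii) (and (ii), where $U$ is of type $3$) we have $A\cap U=0$, so $M\cap U=U\cap S$. In cases (i) and (iii), a dimension count --- using $\dim U_L=\dim A$, $U+S_L=L$ (so $U+S=L$ and $\dim(U\cap S)=\dim L-2\dim A$) and $U_L\cap U\cap S\subseteq U_L\cap C\cap S=0$ --- gives $U=U_L\dot{+}(U\cap S)$ with $U_L$ a minimal ideal of $U$, and symmetrically $S=S_L\dot{+}(U\cap S)$; in case (i), $C_U(U_L)=U\cap C=U_L$, so $U\cong L/M_L$ is primitive of type $1$ and $U\cap S$ is a core-free maximal subalgebra; in case (iii), $(A+U_L)/A$ and $(A+S_L)/A$ are the two distinct non-abelian minimal ideals of $L/A$, mutually centralising (as $C_L(U_L)=A+S_L$), and then Theorem~\ref{t:1}(i) shows $L/A\cong U$ is primitive of type $3$. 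In case (ii), $U_L=0$, so $U+S_L=L$ and $U\cap S_L=0$ exhibit $U\cong L/S_L$, which is primitive of type $2$, while a dimension count gives $S=S_L\dot{+}(U\cap S)$, so $U\cap S\cong S/S_L$ is a core-free maximal subalgebra of $U$. In every case $M$ is maximal of the stated type. The remaining claims follow from $M\cap C=A+((U\cap S)\cap C)=A$ (since $(U\cap S)\cap C\subseteq(U\cap C)\cap(S\cap C)=U_L\cap S_L=0$): this gives $U_L\cap M=S_L\cap M=0$ and $(A+U_L)\cap M=(A+S_L)\cap M=A=M_L$, and, with $A\subseteq M$ and $S_L+(U\cap S)=S$, it gives $U_L+M=S_L+M=(A+S_L)+M=(A+U_L)+M=L$; finally $M\cap U=M\cap S=U\cap S$ in cases (i) and (iii) because there $A\cap U=A\cap S=0$.

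The main obstacle I expect is exactly the maximality of $M$ --- that is, proving $M\cap U$ is genuinely a \emph{maximal} subalgebra of $U$, equivalently recognising $L/M_L$ as an honest primitive algebra of precisely the predicted type rather than merely producing a plausible core-free complement. Pinning down the module structure of $C_L(A)$ sharply enough to justify the decompositions ($U=U_L\dot{+}(U\cap S)$, or $U\cong L/S_L$ in case (ii)) and to enumerate all the minimal ideals of $L/M_L$ is where Theorem~\ref{t:1} has to be used with care; the rest is the modular law together with dimension counting.
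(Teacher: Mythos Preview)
Your overall strategy---reduce to $B=0$ and $U_L\cap S_L=0$, read off the structure of $C_L(A)$ from Theorem~\ref{t:1}, and use dimension counts to decompose $U$ and $S$---is a genuinely different and more uniform route than the paper's, which works each case by direct subalgebra computations (e.g.\ in (ii) it proves $U\cap S$ is maximal in $U$ by a sandwich argument and then invokes \cite[Theorem~1.7]{prim} to identify $L/(A+U_L)$; in (iii) it computes the centralisers of $(A+U_L)/(A+H)$ and $(A+S_L)/(A+H)$ explicitly). Your approach is attractive, and most of the structural claims you assert (e.g.\ $C=U_L\oplus S_L$, $U_L\cong_L A$ in case~(i)) are correct, though several are stated rather than proved.

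There is, however, a real gap in your preamble computation of $M_L$. You write ``$M_L\cap U\subseteq U_L$, $M_L\cap S\subseteq S_L$ force $M_L\cap U\cap S\subseteq U_L\cap S_L$'', but $M_L\cap U$ is only an ideal of $U$, not of $L$: from $L=A+U$ one gets $[M_L\cap U,A]\subseteq M_L\cap A=A$, and $A\not\subseteq U$, so there is no reason for $M_L\cap U$ to lie inside $U_L$. The inclusion $M_L\cap U\subseteq U_L$ is in fact \emph{true}, but only because $M_L=A+U_L\cap S_L$ turns out to hold---which is exactly what you are trying to prove. You then use ``hence $M_L=A$'' immediately after the reduction and rely on it throughout (e.g.\ in writing ``$(A+U_L)\cap M=\dots=A=M_L$''), so the circularity propagates. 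The fix is to drop the preamble claim and instead derive $M_L=A$ inside each case \emph{after} you have shown that $M/A$ is a core-free maximal subalgebra of the primitive algebra $L/A$; your own arguments already contain the ingredients for this (e.g.\ in case~(ii) you show $U\cap S\cong S/S_L$ is core-free in $U\cong L/A$), so the repair is organisational rather than conceptual. The paper avoids the issue by computing $M_L$ separately in each case from the complement/supplement relations it has already established.
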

\begin{proof} We have that $L=A+U=A+S$ and $B\subseteq A\cap U\cap S$.
\begin{itemize}
\item[(i)] Let $A/B$ be abelian and put $C=C_L(A/B)$. First note that $B \subseteq A\cap U \subset A$ and $A\cap U$ is an ideal of $L$ since $A/B$ is abelian, so $B=A\cap U$. Thus $M\cap U=(A+U\cap S)\cap U=A\cap U+U\cap S=B+U\cap S=U\cap S$. Similarly $M\cap S=U\cap S$. Since $U \neq S$, $M$ is a proper subalgebra of $L$. 
\par

Clearly $C=A+U_L=A+S_L$. But $B \subseteq A\cap (U_L+S_L) \subseteq A$, so $A\cap (U_L+S_L)= B$ or $A$. The former implies that $U_L+S_L=U_L+A\cap (U_L+S_L)=U_L+B=U_L$ and $U_L+S_L=S_L$ similarly, contradicting the fact that $U_L \neq S_L$. Hence the latter holds and $C=U_L+S_L$. But now $U_L/U_L\cap S_L \cong_L C/S_L \cong_L A/B$ since $A\cap S_L=B$.
\par

We have $L=S+A=S+C=S+U_L+S_L=S+U_L$, so $M+U_L=A+U\cap S+U_L=A+U\cap(S+U_L)=A+U=L$. Moreover, $U_L\cap S_L \subseteq M\cap U_L$ so $M$ is a maximal subalgebra of $L$ which complements the abelian chief factor $U_L/U_L\cap S_L$. Similarly $M$ complements $S_L/U_L\cap S_L$.
\par

Finally $L=M+U_L=M+C$ and $M\cap C=M\cap (A+U_L)=A+M\cap U_L=A+ U_L\cap S_L$, so $M$ also complements $C/(A+U_L\cap S_L)$, whence $M_L=A+U_L\cap S_L$.
\item[(ii)] Assume that $A/B$ is non-abelian. If $U$ and $S$ were both monolithic of type $2$, then $U_L=S_L=C_L(A/B)$, contradicting our hypothesis. It follows that either $U$ or $S$ is of type $3$.
\par

So assume that $U$ is of type $3$ and that $S$ is monolithic. Then $S_L=C_L(A/B)$. Note that $(A+U_L)/U_L$ is a chief factor of $L$ which is $L$-isomorphic to $A/B$. Hence $(A+U_L)/U_L$ and $S_L/U_L$ are the two minimal ideals of the primitive Lie algebra $L/U_L$ of type $3$. Both of these are complemented by $U$; in particular, $L=U+S_L$.
\par

Now $M+S_L=A+U\cap S+S_L=A+(U+S_L)\cap S=A+S=L$ and $U_L=U_L+B=U_L+A\cap S_L=(U_L+A)\cap S_L \subseteq (U\cap S+A)\cap S_L=M\cap S_L$. It follows that $M$ supplements the chief factor $S_L/U_L$ in $L$.
\par

The quotient algebra 
\[ \frac{L}{A+U_L}=\frac{A+S_L}{A+U_L} + \frac{M}{A+U_L}
\] is primitive of type $2$, by \cite[Theorem 1.7, part 2]{prim}. But then the ideal $M_L/(A+U_L)$ must be trivial, since otherwise we have $A+S_L \subseteq M_L$ which implies that $S_L \subseteq M$, a contradiction. Hence $M_L=A+U_L$.
\par

Let $T$ be a subalgebra of $L$ such that $U\cap S \subseteq T \subseteq U$. Then $S=U\cap S+S_L \subseteq T+S_L \subseteq U+S_L=L$. Since $S$ is a maximal subalgebra of $L$, we have that $T+S_L=S$ or $L$. But then, since $T\cap S_L=U\cap S_L=U_L$, we have $U\cap (T+S_L)=T+U\cap S_L=T$, so $T=U\cap S$ or $T=U\cap L=U$. Hence $U\cap S$ is a maximal subalgebra of $U$. The image of $U\cap S/U\cap A$ under the isomorphism from $U/U\cap A$ onto $L/A$ is $M/A$, and so $M$ is a maximal subalgebra of $L$ of type $2$.
\item[(iii)] Assume now that $U$ and $S$ are maximal subalgebras of type $3$, so that the quotient algebras $L/U_L$ and $L/S_L$ are primitive Lie algebras of type $3$.
\par

If $C=C_L(A/B)$, then $U$ complements the chief factors $(A+U_L)//U_L$ and $C/U_L$. Similarly,  $S$ complements the chief factors $(A+S_L)//S_L$ and $C/S_L$. In particular, $U_L \not \subseteq S_L$ and $S_L \not \subseteq U_L$. Hence $L=U+S_L=S+U_L$. But now, by a similar argument to that at the end of (ii), we have that $M=A+U\cap S$ is a maximal subalgebra of $L$.
\par

Now $C/S_L$ and $C/U_L$ are chief factors of $L$ and $U_L \neq S_L$, so $C=U_L+S_L$. Put $H=U_L\cap S_L$. Then
\[ \frac{A+U_L}{A+H} \cong_L \frac{U_L}{U_L\cap (A+H)}= \frac{U_L}{H} \cong_L \frac{C}{S_L},
\] and so $(A+U_L)/(A+H)$ is a chief factor of $L$ and 
\[C_L\left( \frac{A+U_L}{A+H}\right)=C_L\left(\frac{C}{S_L}\right)=A+S_L.
\] Similarly  $(A+S_L)/(A+H)$ is a chief factor of $L$ and 
\[ C_L\left(\frac{A+S_L}{A+H}\right)=A+U_L.
\] It follows that the quotient Lie algebra $\bar{L}=L/(A+H)$ has two minimal ideals, namely $\bar{N}=(A+S_L)/(A+H)$ and $C_{\bar{L}}(\bar{N})=(A+U_L)/(A+H)$. But $A+M+S_L=A+U\cap S+S_L=A+((U+S_L)\cap S)=A+S=L$. Since $U$ complements $C/U_L$, we have that $U\cap (U_L+S_L)=U_L$, so $U\cap S_L=U_L\cap S_L=H$ and $M\cap (A+S_L)=A+(U\cap S\cap (A+S_L))=A+U\cap S_L=A+H$. Similarly $L=A+M+U_L$ and $M\cap (A+U_L)=A+H$. It follows that the maximal subalgebra $\bar{M}=M/(A+H)$ of $\bar{L}$ complements $\bar{N}$ and $C_{\bar{L}}(\bar{N})$, and thus $\bar{L}$ is a primitive Lie algebra of type $3$. Hence $M_L=A+H$.
\par

Finally note that $M \cap U= (A+U\cap S)\cap U= A\cap U+U\cap S= B+U\cap S=U\cap S$. Similarly $M\cap S=U\cap S$.
\end{itemize}
\end{proof}
\bigskip

The following result is straightforward to check.

\begin{theor} Suppose that $L=B+U$, where $B$ is an ideal of $L$ and $U$ is a subalgebra of $L$. Then $L/B \cong U/B\cap U$ and the following hold.
\begin{itemize}
\item[(i)] If 
\begin{equation} B=B_n < \ldots < B_0=L
\end{equation}
is part of a chief series of $L$, then
\begin{equation} B\cap U = B_n\cap U < \ldots <B_0\cap U=U
\end{equation} is part of a chief series of $U$. If $M$ is a maximal subalgebra of $L$ which supplements a chief factor $B_i/B_{i+1}$ in (1), then $M\cap U$ is a maximal subalgebra of $U$ which supplements the chief factor $B_i\cap U/B_{i+1}\cap U$ in (2). Moreover, $(M\cap U)_U=M_L\cap U$.
\item[(ii)] Conversely, if
\begin{equation} B\cap U=U_n< \ldots <U_0=U
\end{equation} is part of a chief series of $U$, then
\begin{equation} B=B+U_n< \ldots < B+U_0=B+U=L
\end{equation} is part of a chief series of $L$. If $T$ is a maximal subalgebra of $U$ which supplements a chief factor $U_i/U_{i+1}$ in (3), then $B+T$ is a maximal subalgebra of $L$ which supplements the chief factor $(B+U_i)/(B+U_{i+1})$ in (4). Moreover, $(B+T)_L=B+T_U$. 
\end{itemize}
\end{theor}

\begin{lemma}\label{l:3} Let $K$ and $H$ be ideals of a Lie algebra $L$ and let
\[ K=Y_0 \subset Y_1 \subset \ldots \subset Y_{m-1} \subset Y_m=H
\] be part of a chief series of $L$ between $K$ and $H$. Suppose that $A/B$ is a chief factor of $L$ between $K$ and $H$. Then
\begin{itemize}
\item[(i)] if $A+Y_j=B+Y_j$, then $A+Y_k=B+Y_k$ for $j \leq k \leq m$;
\item[(ii)] if $A\cap Y_{j-1}=B\cap Y_{j-1}$, then  $A\cap Y_{k-1}=B\cap Y_{k-1}$ for $1 \leq k \leq j$;
\item[(iii)] if $ B+Y_{j-1} \subset A+Y_{j-1}$, then $B+Y_{k-1} \subset A+Y_{k-1}$ for $1 \leq k \leq j$ and $A\cap Y_{j-1}=B\cap Y_{j-1}$. In this case,
\[ \frac{A+Y_{j-1}}{B+Y_{j-1}} \searrow \frac{A+Y_{k-1}}{B+Y_{k-1}} \searrow \frac{A}{B}.
\]
\item[(iv)] If $B\cap Y_j \subset A\cap Y_j$, then $B\cap Y_k \subset A\cap Y_k$ for $j \leq k \leq m$ and $A+Y_j=B+Y_j$. Moreover,
\[ \frac{A}{B} \searrow \frac{A\cap Y_k}{B\cap Y_k} \searrow \frac{A\cap Y_j}{B\cap Y_j}.
\]
\end{itemize}
\end{lemma}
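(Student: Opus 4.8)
The plan is to dispose of (i) and (ii) first, as they are immediate, and then to bootstrap everything else from them together with the minimality of $A/B$. For (i), since $j\le k$ we have $Y_j\subseteq Y_k$, so $A+Y_k=(A+Y_j)+Y_k=(B+Y_j)+Y_k=B+Y_k$. For (ii), since $k\le j$ we have $Y_{k-1}\subseteq Y_{j-1}$, and intersecting the hypothesis with $Y_{k-1}$ gives $A\cap Y_{k-1}=(A\cap Y_{j-1})\cap Y_{k-1}=(B\cap Y_{j-1})\cap Y_{k-1}=B\cap Y_{k-1}$. The ``propagation'' parts of (iii) and (iv) are then just contrapositives: if $B+Y_{k-1}=A+Y_{k-1}$ for some $1\le k\le j$, then (i) applied at the index $k-1$ forces $B+Y_{j-1}=A+Y_{j-1}$, against the hypothesis of (iii); dually, $B\cap Y_k=A\cap Y_k$ with $j\le k\le m$ would force $B\cap Y_j=A\cap Y_j$ by (ii), against the hypothesis of (iv).

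For the remaining content of (iii) the decisive tool is that $A/B$ is a minimal ideal of $L/B$, so every ideal $N$ of $L$ with $B\subseteq N\subseteq A$ equals $B$ or $A$. Applied to $N=A\cap(B+Y_{j-1})$: the case $N=A$ gives $A\subseteq B+Y_{j-1}$, i.e. $A+Y_{j-1}=B+Y_{j-1}$, which is excluded; hence $A\cap(B+Y_{j-1})=B$, and in particular $A\cap Y_{j-1}=B\cap Y_{j-1}$. For $k\le j$ this yields $A\cap Y_{k-1}\subseteq A\cap Y_{j-1}=B$, so by the modular law ($B\subseteq A$) $A\cap(B+Y_{k-1})=B+(A\cap Y_{k-1})=B$, whence $(A+Y_{k-1})/(B+Y_{k-1})\cong_L A/B$ really is a chief factor. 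The two $\searrow$ relations are then short computations: $(B+Y_{j-1})+(A+Y_{k-1})=A+Y_{j-1}$ and $(B+Y_{k-1})+A=A+Y_{k-1}$ come from $B\subseteq A$ and $Y_{k-1}\subseteq Y_{j-1}$, while the modular law gives $(B+Y_{j-1})\cap(A+Y_{k-1})=Y_{k-1}+(A\cap(B+Y_{j-1}))=B+Y_{k-1}$ (using $Y_{k-1}\subseteq B+Y_{j-1}$) and $(B+Y_{k-1})\cap A=B+(A\cap Y_{k-1})=B$. Chaining these yields the displayed arrows.

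Part (iv) is the intersection-theoretic dual. From $B\cap Y_j\subsetneq A\cap Y_j$ apply minimality to $N=B+(A\cap Y_j)$ (with $B\subseteq N\subseteq A$): $N=B$ would give $A\cap Y_j\subseteq B$, excluded, so $B+(A\cap Y_j)=A$, and hence $A+Y_j=B+(A\cap Y_j)+Y_j=B+Y_j$. For $j\le k$ this gives $B+(A\cap Y_k)=A$ (since $A=B+(A\cap Y_j)\subseteq B+(A\cap Y_k)\subseteq A$), so $(A\cap Y_k)/(B\cap Y_k)\cong_L A/B$ is a chief factor; together with $B\cap(A\cap Y_k)=B\cap Y_k$ this is exactly $A/B\searrow(A\cap Y_k)/(B\cap Y_k)$. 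Finally the modular law ($A\cap Y_j\subseteq Y_k$) gives $(A\cap Y_j)+(B\cap Y_k)=((A\cap Y_j)+B)\cap Y_k=A\cap Y_k$, and $(B\cap Y_k)\cap(A\cap Y_j)=B\cap Y_j$, so $(A\cap Y_k)/(B\cap Y_k)\searrow(A\cap Y_j)/(B\cap Y_j)$.

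I do not expect a genuine obstacle. The only care needed is to invoke the modular identity $X+(Y\cap Z)=(X+Y)\cap Z$ only when $X\subseteq Z$ actually holds, and to keep the two index conventions apart: the ``sum'' statements (i), (iii) run over $1\le k\le j$ with subscript $k-1$, while the ``intersection'' statements (ii), (iv) run over $j\le k\le m$ with subscript $k$, so that the contrapositives of (i) and (ii) get set up with the correctly shifted indices.
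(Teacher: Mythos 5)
Your proof is correct and follows essentially the same route as the paper: (i) and (ii) by direct computation, the strict containments in (iii)/(iv) by the contrapositives, the key step via minimality of $A/B$ applied to $A\cap(B+Y_{j-1})$ (and its dual $B+(A\cap Y_j)$), and the arrows by modular-law computations. The only difference is cosmetic: you write out (iv) explicitly and check that the intermediate quotients are indeed chief factors, where the paper appeals to duality and leaves that implicit.
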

\begin{proof} \begin{itemize}
\item[(i)] This is clear.
\item[(ii)] This is just the dual of (i).
\item[(iii)] The first assertion follows from (i). Now 
\[ (A+Y_{k-1})+(B+Y_{j-1})=A+Y_{j-1} \hbox{ and } A+Y_{k-1}=B+(A+Y_{k-1}).
\] Moreover, $B \subseteq B+A\cap Y_{j-1}=A\cap (B+Y_{j-1}) \subseteq A$. Since $A/B$ is a chief factor of $L$, we have either $B=B+A\cap Y_{j-1}=A\cap (B+Y_{j-1})$ or $A\cap (B+Y_{j-1})=A$. If the latter holds then $A \subseteq B+Y_{j-1}$, which implies that $A+Y_{j-1}=B+Y_{j-1}$, a contradiction. Hence $A\cap Y_{j-1} \subseteq B$, and so $A\cap Y_{j-1}=B\cap Y_{j-1}$. But now $A\cap Y_{k-1}=B\cap Y_{k-1}$, by (ii). Thus
\[ A\cap (B+Y_{k-1})=B+A\cap Y_{k-1}=B+B\cap Y_{k-1}=B,
\] and
\begin{align}
(B+Y_{j-1})\cap (A+Y_{k-1}) & = (B+Y_{j-1})\cap A+Y_{k-1} \nonumber \\
                                                & =B+Y_{j-1}\cap A+Y_{k-1} \nonumber \\
                                                &=B+B\cap Y_{j-1}+Y_{k-1}=B+Y_{k-1}, \nonumber
\end{align}
which completes the proof.
\item[(iv)] This is the dual of (iii).
\end{itemize}
\end{proof}

Let $A/B$ and $C/D$ be chief factors of $L$ such that $A/B \searrow C/D$. If $A/B$ is a Frattini chief factor and $C/D$ is supplemented by a maximal subalgebra of $L$, then we call this situation an {\em m-crossing}, and denote it by $[A/B \searrow C/D]$.
\par

Note that if $[A/B \searrow C/D]$ is an $m$-crossing then $C/D$ must be abelian. For, if $C/D$ is a supplemented nonabelian chief factor, then it has a monolithic supplement, by \cite[Proposition 2.5]{prim}, and so $A/B$ must also be supplemented, by Lemma \ref{l:1} (iv).

\begin{theor}\label{t:1} Let $A/C$, $C/D$ and $B/D$ be chief factors of $L$. If $[A/B \searrow C/D]$ is an $m$-crossing, then so is $[A/C \searrow B/D]$. Moreover, in this case a maximal subalgebra $M$ supplements $C/D$ if and only if $M$ supplements $B/D$.
\end{theor}
\begin{proof} Without loss of generality we can assume that $D=0$. Suppose that $B$ and $C$ are minimal ideals of $L$, $A/B$ is a Frattini chief factor and $C$ is supplemented by a maximal subalgebra $M$ of $L$. Then we show that $A/C$ is a Frattini chief factor of $L$ and $B$ is supplemented by $M$.
\par

If $B \subseteq M$, then $L=A+M$ and $B \subseteq A\cap M$, so $M$ supplements $A/B$ in $L$, a contradiction. Hence $B \not \subseteq M$ and $M$ supplements $B$.
\par

Suppose that $K$ is a maximal subalgebra of $L$ that supplements $A/C$ in $L$, so $L=A+K$ and $C \subseteq A\cap K$. Then $L=A+K=B+C+K=B+K$, so $K$ also supplements $B$ in $L$. Since $C \not \subseteq M_L$ and $C \subseteq K_L$, there is a maximal subalgebra $J=B+M\cap K$ such that $J_L=B+M_L\cap K_L$, by Lemma \ref{l:2}. If $A \subseteq J$ then $A=A\cap J_L=B+M_L\cap K_L\cap A= B+M_L\cap C=B$, which is a contradiction. Hence $J$ supplements $A/B$. But $A/B$ is a Frattini chief factor of $L$, so this is not possible. It follows that $A/C$ is a Frattini chief factor of $L$.
\end{proof}

\begin{propo}\label{p:1} With the same hypotheses as in Lemma \ref{l:3} assume that $A/B$ is a supplemented chief factor of $L$. Let 
\[ j'= \hbox{max} \{j : (A+Y_{j-1})/(B+Y_{j-1}) \hbox{ is a supplemented chief factor of } L \}
\] and put $X=Y_{j'}$ and $Y=Y_{j'-1}$. Then $X/Y$ is a supplemented chief factor in $L$. Furthermore the following conditions are satisfied.
\begin{itemize}
\item[(i)] If $A+X=B+X$, then $A+X=A+Y$ and 
\[ \frac{A}{B} \swarrow \frac{A+X}{B+Y} \searrow \frac{X}{Y}. 
\]
Moreover, $A\cap Y=B\cap Y=B\cap X$ and 
\[ \frac{A}{B} \searrow \frac{A\cap X}{B\cap Y} \swarrow \frac{X}{Y}.
\]
\item[(ii)] If $A+X \neq B+X$ then 
\[ \left[\frac{A+X}{B+X} \searrow \frac{A+Y}{B+Y} \right]
\]
 is an $m$-crossing and 
\[\frac{A}{B} \swarrow \frac{A+Y}{B+Y} \hbox{ and } \frac{B+X}{B+Y} \searrow \frac{X}{Y}. 
\]
\end{itemize}
In particular, in both cases, $(A+Y)/(B+Y)$ and $(B+X)/(B+Y)$ are supplemented chief factors of $L$.
\end{propo}
\begin{proof} Note first that $(A+Y_0)/(B+Y_0)=A/B$ is a supplemented chief factor of $L$, so $j'$ is well-defined.
\par

Suppose that $B+X=B+Y$. Then $A+X=A+Y$, so $(A+X)/(B+X)=(A+Y)/(B+Y)$ is supplemented, contradicting the choice of $j'$. Hence $(B+X)/(B+Y) \searrow X/Y$ and  $(B+X)/(B+Y)$ is a chief factor of $L$.
\begin{itemize}
\item[(i)] Suppose that $A+X=B+X$. Then $B+Y \subset A+Y \subseteq A+X=B+X$, so $A+X=A+Y$. Also $A/B \swarrow (A+X)/(B+Y) \searrow X/Y$ by Lemma \ref{l:3} (iii). 
\par

Moreover, $A=A\cap (B+X)=B+A\cap X$, so $A/B \searrow A\cap X/B\cap X$. But now $A\cap Y=B\cap Y=B\cap X$, by Lemma \ref{l:3} (iii). Hence $A/B \searrow A\cap X/B\cap Y \swarrow X/Y$.
\par

In this case 
\[ \frac{B+X}{B+Y}=\frac{A+X}{B+Y}=\frac{A+Y}{B+Y}
\] is supplemented, by the definition of $j'$.
\item[(ii)] Suppose now that  $A+X\neq B+X$. Then $(A+X)/(B+X)$ is a Frattini chief factor of $L$, by the choice of $j'$. Now $B+Y \subseteq (B+X)\cap (A+Y) \subseteq A+Y$. If $(B+X)\cap (A+Y)=A+Y$, then $A+Y \subseteq B+X$ so $A+X=B+X$, a contradiction. Hence $B+Y=(B+X)\cap (A+Y)$ and $[(A+X)/(B+X) \searrow (A+Y)/(B+Y)]$ is an $m$-crossing. Moreover, $A\cap Y=B\cap Y= B\cap X$, by Lemma \ref{l:3} (iii) again, and so we have $A/B \swarrow (A+Y)/(B+Y)$ and $(B+X)/(B+Y) \searrow X/Y$.
\par

Since  $[(A+X)/(B+X) \searrow (A+Y)/(B+Y)]$ is an $m$-crossing, it follows from Theorem \ref{t:1} that $(A+Y)/(B+Y)$ and $(B+X)/(B+Y)$ are supplemented chief factors of $L$.
\end{itemize}
In either case, $(B+X)/(B+Y)$ is a supplemented chief factor of $L$ and $(B+X)/(B+Y) \searrow X/Y$, so $X/Y$ is a supplemented chief factor of $L$.
\end{proof}

\begin{propo}\label{p:2}  With the same hypotheses as in Lemma \ref{l:3} assume that $A/B$ is a Frattini chief factor of $L$. Let
 \[ j'= \hbox{max} \{j : A\cap Y_j/B\cap Y_j \hbox{ is a Frattini chief factor of } L \}
\] and put $X=Y_{j'}$ and $Y=Y_{j'-1}$. Then $X/Y$ is a Frattini chief factor of $L$. Furthermore the following conditions are satisfied.
\begin{itemize}
\item[(i)] If $A\cap Y=B\cap Y$, then $A\cap Y=B\cap X$ and 
\[ \frac{A}{B} \searrow \frac{A\cap X}{B\cap Y} \swarrow \frac{X}{Y}. 
\]
Moreover, $A+Y=A+X=B+X$ and 
\[ \frac{A}{B} \swarrow \frac{A+X}{B+Y} \searrow \frac{X}{Y}.
\]
\item[(ii)] If $A\cap Y \neq B\cap Y$ then 
\[ \left[\frac{A\cap X}{B\cap X} \searrow \frac{A\cap Y}{B\cap Y} \right]
\]
 is a crossing and 
\[\frac{A}{B} \searrow \frac{A\cap X}{B\cap X} \hbox{ and } \frac{A \cap X}{A \cap Y} \swarrow \frac{X}{Y}. 
\]
\end{itemize}
In particular, in both cases, $(A\cap X)/(B \cap Y)$ and $(A \cap X)/(B\cap X)$ are Frattini chief factors of $L$.
\end{propo}
\begin{proof} This is simply the dual of Proposition \ref{p:1}.
\end{proof}
\bigskip

We say that two chief factors $A/B$ and $C/D$ of $L$ are {\em m-related} if one of the following holds.
\begin{itemize}
\item[1.] There is a supplemented chief factor $R/S$ such that $A/B \swarrow R/S \searrow C/D$.
\item[2.] There is an $m$-crossing $[U/V \searrow W/X]$ such that $A/B \swarrow V/X$ and $W/X \searrow C/D$.
\item[3.] There is a Frattini chief factor $Y/Z$ such that $A/B \searrow Y/Z \swarrow C/D$.
\item[4.] There is an $m$-crossing $[U/V \searrow W/X]$ such that $A/B \searrow U/V$ and $U/W \swarrow C/D$.
\end{itemize}

\begin{theor}\label{t:2} Suppose that $A/B$ and $C/D$ are $m$-related chief factors of $L$. Then
\begin{itemize}
\item[(i)] $A/B$ and $C/D$ are $L$-connected;
\item[(ii)] $A/B$ is Frattini if and only if $C/D$ is Frattini; and
\item[(iii)] if $A/B$ and $C/D$ are supplemented, then there exists a common supplement.
\end{itemize}
\end{theor}
\begin{proof} 
\begin{itemize}
\item[(i)] In case $1$ we have 
\[ \frac{A}{B}=\frac{A}{A\cap S} \cong_L \frac{A+S}{S}= \frac{C+S}{S} \cong_L \frac{C}{C\cap S}=\frac{C}{D}.
\]
In case $3$ we have 
\[ \frac{A}{B}=\frac{B+Y}{B} \cong_L \frac{Y}{B\cap Y}= \frac{Y}{Z}= \frac{Y}{D\cap Y} \cong_L \frac{D+Y}{Y}=\frac{C}{D}.
\]
Consider case $2$. Here $V/X$ and $W/X$ have a common supplement, $M$ say, by Theorem \ref{t:1}. Then $(V+M_L)/M_L$ and $(W+M_L)/M_L$ are minimal ideals of the primitive Lie algebra $L/M_L$. If $V+M_L=W+M_L$ then $V/X \cong_L W/X$, which implies that $A/B \cong_L C/D$. Otherwise $L/M_L$ is a primitive Lie algebra of type $3$ whose minimal ideals are $(V+M_L)/M_L$ and $(W+M_L)/M_L$. Since $A/B \cong_L V/X$ and $C/D \cong_L W/X$ we see that $A/B$ and $C/D$ are $L$-connected.
\par

Case $4$ is similar to case $2$.
\item[(ii)] If $A/B$ is Frattini, then case $1$ of the definition of `$m$-related' cannot hold. Suppose we are in case $2$. Then $[U/W \searrow V/X]$ is an $m$-crossing, by Theorem \ref{t:1}, so $V/X$ is supplemented in $L$. Hence $A/B$ is supplemented in $L$, by Lemma \ref{l:1}, so case $2$ cannot hold. If case $3$ holds, then $C/D$ is Frattini, by Lemma \ref{l:1} (i). In case $4$, $[U/W \searrow V/X]$ is an $m$-crossing, by Theorem \ref{t:1}. But then $U/W$ is Frattini, whence $C/D$ is Frattini, by Lemma \ref{l:1}. 
\item[(iii)] Let $A/B$ and $C/D$ be supplemented. Then we are in either case $1$ or case $2$ of the definition of `$m$-related'. In case $1$, if $M$ supplements $R/S$ then $M$ supplements both $A/B$ and $C/D$, by Lemma \ref{l:1}. So suppose that case $2$ holds, Then there is a common supplement $M$ to $V/X$ and $W/X$, by Theorem \ref{t:1}. But $M$ also supplements $A/B$ and $C/D$, by Lemma \ref{l:1}
\end{itemize}
\end{proof}

\begin{lemma}\label{l:4} With the same hypotheses as in Lemma \ref{l:3} suppose that $A/B$ and $Y_j/Y_{j-1}$ are $m$-related. Then
\begin{itemize}
\item[(i)] $A/B$ and $Y_j/Y_{j-1}$ are supplemented in $L$ if and only if $(A+Y_{j-1})/(B+Y_{j-1})$ is supplemented in $L$; and
\item[(ii)] $A/B$ and $Y_j/Y_{j-1}$ are Frattini in $L$ if and only if $A\cap Y_j/B\cap Y_j$ is Frattini in $L$.
\end{itemize}
\end{lemma}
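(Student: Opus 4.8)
The plan is to prove (i) in detail and to deduce (ii) from it by the same formal duality (interchange $\subseteq$ with $\supseteq$, $+$ with $\cap$, ``supplemented'' with ``Frattini'', $\searrow$ with $\swarrow$, and the two halves of an $m$-crossing) that makes Proposition \ref{p:2} the dual of Proposition \ref{p:1}. Two facts will be used throughout: a chief factor of $L$ is supplemented precisely when it is not Frattini; and, as $A/B$ and $Y_j/Y_{j-1}$ are $m$-related, Theorem \ref{t:2}(ii) shows they are simultaneously Frattini and simultaneously supplemented, so in (i) the hypothesis is equivalent to ``$A/B$ is supplemented'' and in (ii) to ``$A/B$ is Frattini''.

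For the easy direction of (i), if $(A+Y_{j-1})/(B+Y_{j-1})$ is supplemented then it is a chief factor, so $B+Y_{j-1}\subsetneq A+Y_{j-1}$, and Lemma \ref{l:3}(iii) with $k=1$ (recall $Y_0=K\subseteq B$) gives $(A+Y_{j-1})/(B+Y_{j-1})\searrow A/B$; hence $A/B$ is supplemented by Lemma \ref{l:1}(i), and then so is $Y_j/Y_{j-1}$ by Theorem \ref{t:2}(ii).

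For the converse, assume $A/B$ (hence $Y_j/Y_{j-1}$) is supplemented. First I rule out cases 3 and 4 of the definition of ``$m$-related'': each supplies a Frattini chief factor $F$ (namely $Y/Z$ in case 3, $U/V$ in case 4) with $A/B\searrow F$, so Lemma \ref{l:1}(i) would force $A/B$ to be Frattini. In case 1 there is a supplemented chief factor $R/S$ with $R/S\searrow A/B$ and $R/S\searrow Y_j/Y_{j-1}$; since $Y_{j-1}=S\cap Y_j\subseteq S$, a short modular computation yields $R/S\searrow (A+Y_{j-1})/(B+Y_{j-1})$, so this is a chief factor ($L$-isomorphic to $R/S$) and is supplemented, by Lemma \ref{l:1}(i). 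In case 2 there is an $m$-crossing $[U/V\searrow W/X]$ with $V/X\searrow A/B$ and $W/X\searrow Y_j/Y_{j-1}$; Theorem \ref{t:1} makes $[U/W\searrow V/X]$ an $m$-crossing, so $V/X$ is supplemented, and from $Y_{j-1}=X\cap Y_j\subseteq X\subseteq V$ the same computation gives $V/X\searrow (A+Y_{j-1})/(B+Y_{j-1})$; Lemma \ref{l:1}(i) again concludes. Alternatively the entire case analysis can be bypassed: Theorem \ref{t:2}(iii) supplies a common supplement, which may be taken maximal and hence proper; calling it $M$, we have $B,Y_{j-1}\subseteq M$, so $A\subseteq B+Y_{j-1}$ would give $L=A+M=M$, a contradiction; thus $(A+Y_{j-1})/(B+Y_{j-1})$ is a chief factor, manifestly supplemented by $M$.

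Part (ii) is the dual. For its non-trivial direction one assumes $A/B$ Frattini; then cases 1 and 2 are impossible, because the supplemented chief factor they provide, $R/S$ or $V/X$ (the latter supplemented via Theorem \ref{t:1}), satisfies $R/S\searrow A/B$, resp.\ $V/X\searrow A/B$, which by Lemma \ref{l:1}(i) would make $A/B$ supplemented. So case 3 or 4 holds, and a short modular computation shows that $(A\cap Y_j)/(B\cap Y_j)$ is a chief factor with $(A\cap Y_j)/(B\cap Y_j)\searrow Y/Z$ (case 3) or $\searrow U/V$ (case 4), where $Y/Z$, resp.\ $U/V$, is Frattini; hence $(A\cap Y_j)/(B\cap Y_j)$ is Frattini by Lemma \ref{l:1}(i). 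The converse half of (ii) runs as in (i) with Lemma \ref{l:3}(iv) (and $k=m$, $Y_m=H\supseteq A$) in place of Lemma \ref{l:3}(iii). I expect the only real difficulty to be the bookkeeping: making sure at each step that the relevant sum or intersection with the chain really is a non-zero chief factor — which the $L$-isomorphism with the intermediate factor, or the properness of the common supplement, provides — and keeping straight which of the four clauses defining ``$m$-related'' can survive under the standing Frattini or supplemented hypothesis.
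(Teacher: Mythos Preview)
Your proof is correct and follows essentially the same route as the paper: for the forward direction of (i) you reduce to cases 1 and 2 of the definition of $m$-related and use the modular law to exhibit $R/S \searrow (A+Y_{j-1})/(B+Y_{j-1})$ (respectively $V/X \searrow (A+Y_{j-1})/(B+Y_{j-1})$), exactly as the paper does; the converse via Lemma \ref{l:3}(iii) and the appeal to duality for (ii) are also the paper's choices. Your write-up is in fact a touch more careful than the paper's, which handles only cases 1 and 2 without pausing to say why cases 3 and 4 are excluded when $A/B$ is supplemented; your explicit elimination via Lemma \ref{l:1}(i) fills that small gap. The alternative argument you sketch, pulling a common maximal supplement from Theorem \ref{t:2}(iii) and observing that $B+Y_{j-1}\subseteq M$ forces $A\not\subseteq B+Y_{j-1}$, is a pleasant shortcut not present in the paper.
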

\begin{proof}
\begin{itemize}
\item[(i)] Put $C=Y_j$, $D=Y_{j-1}$  and suppose that case $1$ of the definition of `$m$-related' holds. If $A+D=B+D$ then $R=A+D+S=B+D+S=S$, a contradiction, so $B+D \subset A+D$. It follows from Lemma \ref{l:3} (iii) that $(A+D)/(B+D) \searrow A/B$, and, in particular, that $(A+D)/(B+D)$ is a chief factor of $L$. But $B+D \subseteq (A+D)\cap S \subseteq A+D$. Since $A+D \not \subseteq S$ we have that $(A+D)\cap S=B+D$ and $R/S \searrow (A+D)/(B+D)$. Hence $(A+Y_{j-1})/(B+Y_{j-1})$ is supplemented in $L$.
\par

Now suppose that case $2$ holds. If $A+D=B+D$, then $V=A+X=A+D+X=B+D+X=X$, a contradiction, so $B+D \subset A+D$ and, as above, $(A+D)/(B+D) \searrow A/B$. Now $V=A+D+X$ and $(A+D)\cap X=A\cap X+D=B+D$, so $V/X \searrow (A+D)/(B+D)$. Hence $(A+Y_{j-1})/(B+Y_{j-1})$ is supplemented in $L$.
\par

The converse follows from Lemma \ref{l:3} (iii).
\item[(ii)] This the dual statement to (i).
\end{itemize}
\end{proof}

\section{A generalised Jordan-H\"{o}lder Theorem}

\begin{theor}\label{t:3}  Let $K$ and $H$ be ideals of $L$ such that $K \subset H$ and two sections of chief series of $L$ between $K$ and $H$ are
\[ K=X_0 \subset X_1 \subset \ldots \subset X_n=H
\] and
\[ K=Y_0 \subset Y_1 \subset \ldots \subset Y_m=H.
\]
Then $n=m$ and there is a unique permutation $\sigma \in S_n$ such that $X_i/X_{i-1}$ and $Y_{\sigma(i)}/Y_{\sigma(i)-1}$ are $m$-related, for $1 \leq i \leq n$. Furthermore
\[ \sigma(i)=max \{ j : (X_i+Y_{j-1})/(X_{i-1}+Y_{j-1}) \hbox{ is supplemented in } L \}
\] if $X_i/X_{i-1}$ is supplemented in $L$, and
\[ \sigma(i)=min \{ j : X_i\cap Y_j/X_{i-1}\cap Y_j \hbox{ is Frattini in } L \}
\] if $X_i/X_{i-1}$ is Frattini in $L$
\end{theor}
\begin{proof} We can assume without loss of generality that $n \geq m$. Put $A=X_i$, $B=X_{i-1}$, $X=Y_{\sigma(i)}$, $Y=Y_{\sigma(i)-1}$.
\par

By Proposition \ref{p:1}, if $A/B$ is supplemented in $L$, then so is $X/Y$. Moreover, if $A+X=B+X$ then $A/B \swarrow (A+Y)/(B+Y) \searrow X/Y$, by Proposition \ref{p:1} (i). Also $(A+Y)/(B+Y)$ is supplemented in $L$, by the definition of $\sigma(i)$. Thus, this is case $1$ of the definition of `$m$-related'. If $A+X \neq B+X$ then we are in case $2$ of the definition, by Proposition \ref{p:1} (ii). 
\par

Dually, by Proposition \ref{p:2}, if $A/B$ is Frattini, then so is $X/Y$. Moreover, if $A\cap X=B\cap X$, then $A/B \searrow A\cap X/B\cap Y \swarrow X/Y$, by Proposition \ref{p:2} (i). Also $A\cap X/A\cap Y$ is Frattini, by the definition of $\sigma(i)$. Thus, this is case $3$ of the definition of `$m$-related'. If $A\cap X \neq B\cap X$ then we are in case $4$ of the definition, by Proposition \ref{p:2} (ii).
\par

Therefore, in all cases, $A/B$ and $X/Y$ are $m$-related for $1 \leq i \leq n$.
\par

Next we show that the map $\sigma : \{1, \ldots, n\} \rightarrow \{1, \ldots, m\}$ defined in the statement of the theorem is injective. Put $C=X_k$ and $D=X_{k-1}$, where $i < k$ and $\sigma(i)=\sigma(k)$.
\par

Suppose that $A/B$ is supplemented in $L$; then so are $X/Y$ and $C/D$. Suppose that $A+X=B+X$. Then $A \subseteq D$, so $A+X=A+Y$, by Proposition \ref{p:1} (i),  which yields that $D+X=D+A+X=D+A+Y=D+Y$. Since $C/D$ is supplemented in $L$ and $\sigma(k)=j$, $(C+Y)/(D+Y)$ is a chief factor of $L$, and then $D+X=D+Y \subset C+Y=C+X$. It follows from Proposition \ref{p:1} (ii) that $(D+X)/(D+Y) \searrow X/Y$; in particular, $D+X \neq D+Y$, a contradiction.
\par

Hence $B+X \subset A+X$. Then $[(A+X)/(B+X) \searrow (A+Y)/(B+Y)]$ is an $m$-crossing, by Proposition \ref{p:1} (ii), and hence so is $[(A+X)/(A+Y) \searrow (B+X)/(B+Y)]$, by Theorem \ref{t:1}. It follows that the chief factor $(A+X)/(A+Y)$ is Frattini. Since $\sigma(k)=j$ we have that $(C+Y)/(D+Y)$ and $(D+X)/(D+Y)$ are supplemented chief factors of $L$. But $A \subseteq D$ so $A+Y \subseteq D+Y$ and $A+X \subseteq D+X$. Also $D+X=(D+Y)+(A+X)$ and $(D+Y)\cap (A+X)=A+D\cap X+Y$. But $Y \subseteq D\cap X+Y \subseteq X$ and $X/Y$ is a chief factor of $L$. If $D\cap X +Y=X$ then $X \subseteq D+Y$ and  so $D+X=D+Y$, contradicting the fact that $(D+X)/(D+Y)$ is a chief factor of $L$. Hence $D\cap X+Y=Y$, giving $D\cap X \subseteq Y$ and $(D+Y)\cap (A+X)=A+Y$. Thus $(D+X)/(D+Y) \searrow (A+X)/(A+Y)$, which implies that $(D+X)/(D+Y)$ is Frattini, by Lemma \ref{l:1}, which is a contradiction
\par

We have shown that the restriction of $\sigma$ to the subset ${\mathcal I}$ of $\{1, \ldots, n \}$ composed of all indices $i$ corresponding to the supplemented chief factors $X_i/X_{i-1}$ is injective. Applying dual arguments shows that the restriction of $\sigma$ to the subset of $\{1, \ldots, n\} \setminus {\mathcal I}$ consisting of all Frattini chief factors $X_i/X_{i-1}$ is injective. By arguments at the beginning of the proof, $\sigma$ is injective. Hence $n=m$ and $\sigma \in S_n$.
\par

Finally, if $\tau$ is any permutation with the above properties then the definition of $\sigma$ requires $\tau(i)=\sigma(i)$ for all $i \in {\mathcal I}$ and $\tau(i)=\sigma(i)$ for all $i \in \{1, \ldots, n\} \setminus {\mathcal I}$, by Lemma \ref{l:4}. Hence $\tau = \sigma$.
\end{proof}

\begin{coro} Let $\sigma$ be the permutation constructed in Theorem \ref{t:3}. If $X_i/X_{i-1}$ and $Y_{\sigma(i)}/Y_{\sigma(i)-1}$ are supplemented, then they have a common supplement. Moreover, the same is true if we replace `supplement' by `complement'. 
\end{coro}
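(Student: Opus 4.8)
The plan is to obtain the first assertion directly from results already proved and then to re-run the proof of Theorem~\ref{t:2}(iii) with the word ``complement'' replacing ``supplement'' throughout. By Theorem~\ref{t:3} the factors $X_i/X_{i-1}$ and $Y_{\sigma(i)}/Y_{\sigma(i)-1}$ are $m$-related, so a common supplement exists by Theorem~\ref{t:2}(iii). For the statement about complements, a complemented chief factor is supplemented, so the analysis at the beginning of the proof of Theorem~\ref{t:3} places the pair $X_i/X_{i-1}$, $Y_{\sigma(i)}/Y_{\sigma(i)-1}$ in case~1 or case~2 of the definition of `$m$-related'. In case~1 there is a supplemented chief factor $R/S=(X_i+Y_{\sigma(i)-1})/(X_{i-1}+Y_{\sigma(i)-1})$ with $R/S\searrow X_i/X_{i-1}$ and $R/S\searrow Y_{\sigma(i)}/Y_{\sigma(i)-1}$; in case~2 there is an $m$-crossing whose two chief factors $V/Z$ and $W/Z$ satisfy $V/Z\searrow X_i/X_{i-1}$ and $W/Z\searrow Y_{\sigma(i)}/Y_{\sigma(i)-1}$. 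By Lemma~\ref{l:1}(iii) any complement of $R/S$, and any common complement of $V/Z$ and $W/Z$, is automatically a common complement of $X_i/X_{i-1}$ and $Y_{\sigma(i)}/Y_{\sigma(i)-1}$; so it is enough to produce such a complement.

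The key observation is that an \emph{abelian} chief factor $A/B$ is complemented exactly when it is supplemented: if $M$ is a maximal subalgebra with $L=A+M$ and $B\subseteq A\cap M$, then $[A,A]\subseteq B\subseteq A\cap M$, so $A\cap M$ is an ideal of $L=A+M$ lying between $B$ and $A$, and hence equals $B$. Using this together with Theorem~\ref{t:2}(ii) (the intermediate factors are $m$-related to $X_i/X_{i-1}$, hence non-Frattini) and the remark following the definition of $m$-crossing (in case~2 the factors $U/V$, $V/Z$ and $W/Z$ are abelian), the routine cases are settled: in case~2, $V/Z$ and $W/Z$ are abelian and complemented, Theorem~\ref{t:1} provides a single maximal subalgebra $M$ supplementing both of them, by the observation $M$ complements both, and Lemma~\ref{l:1}(iii) gives the common complement; and in case~1, whenever $R/S$ is abelian it is supplemented, hence complemented by a maximal subalgebra, which descends by Lemma~\ref{l:1}(iii).

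The case that genuinely requires more work, and which I expect to be the main obstacle, is case~1 with the factors nonabelian, since there a supplement of $R/S$ need not be a complement. Here Theorem~\ref{t:2}(i) gives $X_i/X_{i-1}\cong_L R/S\cong_L Y_{\sigma(i)}/Y_{\sigma(i)-1}$, and, using Lemma~\ref{l:1}(iv) and the fact that a supplemented nonabelian chief factor has a monolithic supplement, one checks that all three factors have the same centraliser $B^{*}$, that $X_{i-1},Y_{\sigma(i)-1}\subseteq B^{*}$, and that $R\cap B^{*}=X_{i-1}+Y_{\sigma(i)-1}=S$. If there is a complement $M_1$ of $X_i/X_{i-1}$ with $Y_{\sigma(i)-1}\subseteq M_1$, then, since $X_i\cap M_1=X_{i-1}$, the modular identity $R\cap M_1=(Y_{\sigma(i)-1}+X_i)\cap M_1=Y_{\sigma(i)-1}+(X_i\cap M_1)=Y_{\sigma(i)-1}+X_{i-1}=S$ shows that $M_1$ also complements $R/S$, and Lemma~\ref{l:1}(iii) then makes $M_1$ a common complement of all three factors. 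The hard part is the configuration where no complement of $X_i/X_{i-1}$ contains $Y_{\sigma(i)-1}$ (and, symmetrically, no complement of $Y_{\sigma(i)}/Y_{\sigma(i)-1}$ contains $X_{i-1}$): there one has $L/B^{*}$ primitive of type~$2$ with the images of $X_i$, $R$ and $Y_{\sigma(i)}$ all equal to its socle, and — working also in the type-$3$ quotient $L/K$ attached to a maximal complement of $X_i/X_{i-1}$ (by \cite[Theorem~1.1]{prim}) — one must construct a common complement of the three factors by a diagonal-type construction built from the $L$-isomorphisms above. This construction is where the main work of the proof lies.
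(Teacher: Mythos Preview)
Your treatment of the first assertion and of the abelian situations is correct and essentially matches the paper's. The genuine gap is exactly where you flag it: case~1 with the factors nonabelian. You leave this open, anticipating a delicate ``diagonal-type construction'' in a type-$3$ quotient; in fact no such construction is needed, and the paper disposes of this case in a few lines.

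Here is the idea you are missing. In the nonabelian case the paper first notes (as you in effect do) that cases~2,~3,~4 of the definition of $m$-related are impossible, so we are in case~1 with $R/S\searrow X_i/X_{i-1}$ and $R/S\searrow Y_{\sigma(i)}/Y_{\sigma(i)-1}$. Now let $M$ be a complement of $X_i/X_{i-1}$. The point is that Lemma~\ref{l:1}(iv), applied with the nonabelian factor $X_i/X_{i-1}$ in the role of $C/D$, already makes $M$ a supplement of $R/S$ (and hence of $Y_{\sigma(i)}/Y_{\sigma(i)-1}$ by Lemma~\ref{l:1}(i)). In particular the ideal $S$ lies in $M$, hence in $M_L$, and one computes
\[
X_i+M_L = X_i+S+M_L = R+M_L = Y_{\sigma(i)}+S+M_L = Y_{\sigma(i)}+M_L,
\]
so that $M\cap(Y_{\sigma(i)}+M_L)=M\cap(X_i+M_L)=(M\cap X_i)+M_L=X_{i-1}+M_L=M_L$. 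Thus $M\cap Y_{\sigma(i)}=M_L\cap Y_{\sigma(i)}$, an ideal strictly between $Y_{\sigma(i)-1}$ and $Y_{\sigma(i)}$ only if it equals $Y_{\sigma(i)}$, which would force $M=L$; hence $M\cap Y_{\sigma(i)}=Y_{\sigma(i)-1}$ and $M$ complements $Y_{\sigma(i)}/Y_{\sigma(i)-1}$ as well.

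So the configuration you fear --- ``no complement of $X_i/X_{i-1}$ contains $Y_{\sigma(i)-1}$'' --- simply does not occur: Lemma~\ref{l:1}(iv) forces the complement $M$ to contain $S\supseteq Y_{\sigma(i)-1}$, which is precisely the hypothesis under which your own modular-law argument already finishes. Your sketch of a diagonal construction in a type-$3$ quotient is therefore unnecessary; the proof is completed by a two-line core computation rather than by building a new subalgebra.
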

\begin{proof} The first assertion follows immediately from Theorem \ref{t:2}. The second is clear if both chief factors are abelian. So suppose that they are complemented nonabelian chief factors. Then case $2$ of the definition of $m$-related cannot hold, since $W/X$ (and thus $Y_{\sigma(i)}/Y_{\sigma(i)-1}$) would have to be abelian, by the remark immediately preceding Theorem \ref{t:1}. Case $3$ cannot hold, since $Y/Z$ is not supplemented, and case $4$ cannot hold, since $U/W$ is not supplemented. Hence case $1$ holds, and there is a supplemented chief factor $R/S$ such that $X_i/X_{i-1} \swarrow R/S \searrow Y_{\sigma(i)}/Y_{\sigma(i)-1}$, so $R=X_i+S=Y_{\sigma(i)}+S$, $X_i\cap S=X_{i-1}$ and $Y_{\sigma(i)}\cap S=Y_{\sigma(i)-1}$. 
\par

Let $M$ be a complement of $X_i/X_{i-1}$, so $L=X_i+M$ and $X_i\cap M=X_{i-1}$. Moreover, it is also a supplement of $R/S$ and $Y_{\sigma(i)}/Y_{\sigma(i)-1}$, by Lemma \ref{l:1} (iv), so $L=R+M=Y_{\sigma(i)}+M$, $S \subseteq R\cap M$ and $Y_{\sigma(i)-1}\subseteq Y_{\sigma(i)}\cap M$.  Then \[R+M_L=X_i+S+M_L=X_i+M_L=Y_{\sigma(i)}+M_L
\] and 
\[M\cap (Y_{\sigma(i)}+M_L)=M\cap(X_i+M_L)=X_{i-1}+M_L=M_L. 
\]Hence $M\cap Y_{\sigma(i)}=M_L\cap Y_{\sigma(i)}=Y_{\sigma(i)-1}$ and $M$ complements $Y_{\sigma(i)}/Y_{\sigma(i)-1}$.
\end{proof}


\begin{thebibliography}{1}

\bibitem{b-e} {\sc A. Ballester-Bolinches and L.M. Ezquerro}, `Classes of Finite Groups', {\em Mathematics and its Applications} {\bf 584} (2006), Springer.

\bibitem{laf} {\sc J. Lafuente}, `Maximal subgroups and the Jordan-H\"{o}lder Theorem', {\em J. Austral. Math. Soc. (Series A)} {\bf 46} (1989), 356--364.

\bibitem{prefrat} {\sc D.A. Towers}, `Complements of intervals and prefrattini subalgebras of solvable Lie algebras',
{\em Proc. Amer. Math. Soc.} {\bf 141} (2013), 1893--1901.

\bibitem{prim} {\sc D.A. Towers}, `Maximal subalgebras and chief factors of Lie algebras', {\em J. Pure Appl. Algebra} {\bf 220} (2016), 482--493.

\end{thebibliography}
\end{document}